\newcommand{\N}{{\mathbb N}}
\newcommand{\R}{{\mathbb R}}
\newcommand{\Cnt}[1][]{{\mathcal C}^{#1}}
\newcommand{\co}[1]{{#1}^{c}}
\newcommand{\conv}{\ast}
\newcommand{\csub}{\subset\subset}
\newcommand{\eps}{\varepsilon}
\renewcommand{\implies}{\Rightarrow}
\newcommand{\supp}{\mathop{\mathrm{supp}}}
\newcommand{\test}{\mathcal D}
\newcommand{\Gen}{{\mathcal G}}
\newcommand{\ster}[1]{{{}^* \mskip-1mu #1}}
\newtheorem*{thm}{Theorem}
\theoremstyle{definition}
\newtheorem*{ex}{Example}
\begin{document}
\title[Regularity of nonlinear generalized functions]{Regularity of nonlinear generalized functions:\\a counterexample in the nonstandard setting}

\author{H.~Vernaeve}\address{Dept.\ of Mathematics, Krijgslaan 281, Ghent University, 9000 Gent, Belgium}\thanks{Supported by research Grant 1.5.138.13N of the Research Foundation Flanders FWO}

\begin{abstract}
Regularity theory in generalized function algebras of Colombeau type is largely based on the notion of $\Gen^\infty$-regularity, which reduces to $\Cnt[\infty]$-regularity when restricted to Schwartz distributions. Surprisingly, in the nonstandard version of the Colombeau algebras, this basic property of $\Gen^\infty$-regularity does not hold.
\end{abstract}
\maketitle

\section{Introduction}
Generalized function algebras are differential algebras that contain (up to isomorphism) the space of Schwartz distributions as a differential subspace, and in which the product of $\Cnt[\infty]$-functions coincides with their usual product. They have been introduced by J.~F.\ Colombeau \cite{Colombeau1984} and find their main applications in the study of nonlinear PDE (e.g.\ in General Relativity \cite{GKOS,kun-stein99,stein-vick06}) and PDE with highly singular data or coefficients (e.g.\ \cite{hoop08,GarObeSymm,keyfitz,Ob92}) for which the distributional solution concept does not make sense.

A well-developed qualitative theory of generalized solutions to PDE has emerged based on the notion of $\Gen^\infty$-regularity \cite{Ob92,Scarpa93} and the corresponding $\Gen^\infty$-microlocal regularity, aiming at describing the propagation of singularities of PDE (e.g.\ \cite{Deguchi,GarHor95,GarHorObeFIO,hop06,HV-Microlocal}). The basic property which makes $\Gen^\infty$-regularity of a generalized function a suitable concept to study its regularity is that the $\Gen^\infty$-regular distributions (viewed as elements of the generalized function algebra) are exactly the $\Cnt[\infty]$-functions.

Due to inherent similarities in the construction of the Colombeau algebras with the construction of algebras of functions in nonstandard models of analysis \cite{Rob}, also a nonstandard version of the Colombeau algebras has been constructed \cite{OT98}. This variant enjoys similar, but in some aspects nicer properties than the standard algebra. E.g., the ring of (real) scalars in the nonstandard algebra is a (totally) ordered field (and not, as in the standard algebra, a partially ordered ring with zero divisors), and a Hahn-Banach extension property for continuous linear functionals holds \cite{TVfull} (which fails in the standard algebra \cite{HV-Ideals}). More generally, the full principles of nonstandard analysis (such as the Transfer Principle) are available for representatives of generalized functions in the nonstandard algebra (whereas only a restricted version holds in the standard algebras \cite{HV-NSPrinc}). The above-mentioned basic property of $\Gen^\infty$-regularity was therefore expected to hold also in the nonstandard version. However, the proof remained elusive. The goal of this paper is to construct an explicit counterexample.

\section{Preliminaries}
Let $\Omega\subseteq\R^d$ be open. We work in the so-called special Colombeau algebra $\Gen(\Omega)=\mathcal M(\Omega)/\mathcal N(\Omega)$ \cite[\S 1.2]{GKOS}, where
\begin{align*}
\mathcal M(\Omega) &= \{(u_\eps)_\eps\in \Cnt[\infty](\Omega)^{(0,1]}:\\
&(\forall K\csub\Omega) (\forall\alpha\in\N^d) (\exists N\in\N) (\exists \eps_0\in(0,1]) (\forall \eps\in(0,\eps_0)) (\sup_{x\in K}|\partial^\alpha u_\eps(x)|\le\eps^{-N})\}\\
\mathcal N(\Omega) &= \{(u_\eps)_\eps\in \Cnt[\infty](\Omega)^{(0,1]}:\\
&(\forall K\csub\Omega) (\forall\alpha\in\N^d) (\forall m\in\N) (\exists \eps_0\in(0,1]) (\forall \eps\in(0,\eps_0)) (\sup_{x\in K}|\partial^\alpha u_\eps(x)|\le\eps^{m})\}.
\end{align*}
We denote by $[u_\eps]\in\Gen(\Omega)$ the generalized function with representative $(u_\eps)_{\eps}$.\\
$\Gen^\infty(\Omega)$ is the subalgebra of those $u\in\Gen(\Omega)$ with a representative $(u_\eps)_\eps$ satisfying
\[(\forall K\csub\Omega) (\exists N\in\N) (\forall\alpha\in\N^d) (\exists \eps_0\in(0,1]) (\forall \eps\in(0,\eps_0)) (\sup_{x\in K}|\partial^\alpha u_\eps(x)|\le\eps^{-N}).\]

\smallskip

Let $\mathcal U$ be a free ultrafilter on $(0,1]$ with $(0,\delta]\in \mathcal U$ for each $\delta>0$. The nonstandard version of the special algebra is ${}^\rho\mathcal E(\Omega)=\mathcal M(\Omega)/\mathcal N(\Omega)$ \cite{Ob88,OT98}, where
\begin{align*}
\mathcal M(\Omega) &= \{(u_\eps)_\eps\in \Cnt[\infty](\Omega)^{(0,1]}:\\
&(\forall K\csub\Omega) (\forall\alpha\in\N^d) (\exists N\in\N) (\exists S\in\mathcal U) (\forall \eps\in S) (\sup_{x\in K}|\partial^\alpha u_\eps(x)|\le\eps^{-N})\}\\
\mathcal N(\Omega) &= \{(u_\eps)_\eps\in \Cnt[\infty](\Omega)^{(0,1]}:\\
&(\forall K\csub\Omega) (\forall\alpha\in\N^d) (\forall m\in\N) (\exists S\in\mathcal U) (\forall \eps\in S) (\sup_{x\in K}|\partial^\alpha u_\eps(x)|\le\eps^{m})\}
\end{align*}
(to be precise, this is the nonstandard version of the algebra in \cite{OT98} in the case where the nonstandard model is constructed using the ultrafilter $\mathcal U$ on $(0,1]$ and where the fixed positive infinitesimal $\rho\in\ster\R$ has representative $(\eps)_{\eps\in(0,1]}$).\\
${}^\rho\mathcal E^\infty(\Omega)$ is the subalgebra of those $u\in {}^\rho\mathcal E(\Omega)$ with a representative $(u_\eps)_\eps$ satisfying
\[(\forall K\csub\Omega) (\exists N\in\N) (\forall\alpha\in\N^d) (\exists S\in\mathcal U) (\forall \eps\in S) (\sup_{x\in K}|\partial^\alpha u_\eps(x)|\le\eps^{-N}).\]

Let $\phi$ be a function in the Schwartz space $\mathcal S(\R^d)$ which satisfies the moment conditions $\int_{\R^d}\phi(x)\,dx=1$ and $\int_{\R^d}x^\alpha\phi(x)\,dx=0$ for each $\alpha\in\N^d\setminus\{0\}$, and denote $\phi_\eps(x):= \frac1{\eps^d}\phi(x/\eps)$. Then the embedding $\iota$ of the space $\test'(\Omega)$ of Schwartz distributions into the Colombeau algebra $\Gen(\Omega)$ is given, for $T\in\mathcal E'(\Omega)$, by $\iota(T)=[T\conv \phi_\eps]$ \cite[\S 1.2]{GKOS}.
\smallskip

Our counterexample will be based on a careful analysis of a  proof of the result for the standard algebra. For the sake of the readability of the counterexample, we recall the proof in an elementary version:
\begin{thm}\cite{Ob92,Scarpa93}
$\Gen^\infty(\Omega)\cap\iota(\test'(\Omega))=\Cnt[\infty](\Omega)$.
\end{thm}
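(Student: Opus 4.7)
The forward inclusion $\Cnt[\infty](\Omega)\subseteq\Gen^\infty(\Omega)\cap\iota(\test'(\Omega))$ is immediate: any $f\in\Cnt[\infty](\Omega)$ is represented by the constant net $(f)_\eps$, which satisfies the $\Gen^\infty$-bound with $N=0$, and $\iota(f)=[f]$ in $\Gen(\Omega)$ since the moment conditions on $\phi$ force $f-f\conv\phi_\eps$ to be negligible. For the nontrivial direction, fix $T\in\test'(\Omega)$ with $\iota(T)\in\Gen^\infty(\Omega)$ and an arbitrary $K\csub\Omega$; I will show $T$ is smooth on a neighborhood of $K$. Pick $K'$ compact with $K\subset\mathrm{int}(K')\csub\Omega$ and $\chi\in\test(\Omega)$ with $\chi\equiv1$ on a neighborhood of $K'$. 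Since $T-\chi T$ vanishes near $K'$ and $\phi$ is Schwartz, $((T-\chi T)\conv\phi_\eps)_\eps$ belongs to $\mathcal N$ on $K$; hence I may replace $T$ by $\chi T\in\mathcal E'(\Omega)$ and write $\iota(T)=[T\conv\phi_\eps]$. As any two representatives of an element of $\Gen^\infty$ differ by an $\mathcal N$-net, this canonical representative itself satisfies the $\Gen^\infty$-bound on $K'$: there is $N\in\N$ such that for every $\alpha\in\N^d$ one has $\sup_{K'}|\partial^\alpha(T\conv\phi_\eps)|\le\eps^{-N}$ for all sufficiently small $\eps$ (with the threshold possibly depending on $\alpha$).

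\medskip
The plan is to prove that $(T\conv\phi_\eps)_\eps$ is Cauchy in $\Cnt[k](K)$ as $\eps\to 0$ for every $k$; the limit is then a $\Cnt[k]$-function that coincides with $T$ as a distribution, and letting $k\to\infty$ yields $T\in\Cnt[\infty](\mathrm{int}(K))$. The engine is the standard moment-condition estimate: for every $g\in\Cnt[\infty]$ on a neighborhood of $K'$ and every $\alpha\in\N^d$, $M\in\N$,
\[\sup_K|\partial^\alpha(g-g\conv\phi_\eps)|\le C_{\alpha,M}\,\eps^{M+1}\sup_{K'}|\partial^{M+1+\alpha}g|,\]
which follows from Taylor-expanding $g$ up to order $M$: the identities $\int y^\beta\phi(y)\,dy=0$ for $\beta\neq0$ annihilate the polynomial part, leaving only an $\eps^{M+1}$ integral remainder. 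Apply this with $g=T\conv\phi_{\eps'}$, for which the $\Gen^\infty$-bound yields $\sup_{K'}|\partial^\beta(T\conv\phi_{\eps'})|\le\eps'^{-N}$ for all $\beta$; combining the estimate with its $(\eps,\eps')$-symmetric counterpart and the identity $(T\conv\phi_\eps)\conv\phi_{\eps'}=(T\conv\phi_{\eps'})\conv\phi_\eps$, the triangle inequality gives
\[\sup_K|\partial^\alpha(T\conv\phi_\eps-T\conv\phi_{\eps'})|\le C_{\alpha,M}\,\bigl(\eps^{M+1}\eps'^{-N}+\eps'^{M+1}\eps^{-N}\bigr).\]
With the dyadic choice $\eps_n=2^{-n}$ the successive differences are bounded by a constant times $2^{-n(M+1-N)}$, which is summable for $M\ge N$, so $(T\conv\phi_{\eps_n})_n$ is Cauchy in $\Cnt[k](K)$ as desired.

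\medskip
The step I expect to require the most care is the Cauchy estimate, whose punchline is that the $\alpha$-independence of $N$ (the very definition of $\Gen^\infty$-regularity) lets the moment-condition gain $\eps^{M+1}$ beat the derivative blow-up $\eps^{-N}$ for arbitrarily large $M$. In the standard setting the same $\eps_0$-free exponent $N$ works for every derivative order, so the argument closes cleanly; in the nonstandard setting only the analogous bound $\sup_{K'}|\partial^\alpha(T\conv\phi_\eps)|\le\eps^{-N}$ on a set $S_\alpha\in\mathcal U$ is available, with $S_\alpha$ depending on $\alpha$, and the incompatibilities between these sets across different $\alpha$ are presumably what the forthcoming counterexample exploits.
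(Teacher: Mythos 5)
Your argument is essentially correct but takes a genuinely different route from the paper's. The paper works on the Fourier side: after the same cut-off reduction to $T\in\mathcal E'$, it converts the $\Gen^\infty$-bound into $|\xi^\alpha\widehat T(\xi)\widehat\phi(\eps\xi)|\le\eps^{-N}$ uniformly in $\eps$ and $\xi$, and then makes the $\xi$-dependent choice $\eps=\delta/|\xi|$ (where $|\widehat\phi|\ge 1/2$ for $|\eta|\le\delta$) to conclude $\widehat T\in\mathcal S(\R^d)$, hence $T\in\mathcal S\subseteq\Cnt[\infty]$. You stay on the physical side and run a telescoping argument: the moment conditions give an $O(\eps^{M+1})$ gain for $g\mapsto g-g\conv\phi_\eps$, and the $\alpha$-independence of $N$ lets $M$ beat $N$, so $(T\conv\phi_{2^{-n}})_n$ converges in every $\Cnt[k](K)$ and the limit, being also the distributional limit, is $T$. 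Both proofs isolate the same mechanism (a fixed loss $\eps^{-N}$ against an arbitrarily large gain in $\eps$); yours is more elementary and avoids Fourier analysis, while the paper's version is the one whose failure mode (the step $\eqref{ss-sup}\implies\eqref{T-hat-in-S}$, which requires choosing $\eps\approx\delta/|\xi|$ for \emph{every} large $\xi$) is what the counterexample directly exploits.

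One step needs repair. The estimate
\[\sup_K|\partial^\alpha(g-g\conv\phi_\eps)|\le C_{\alpha,M}\,\eps^{M+1}\sup_{K'}|\partial^{M+1+\alpha}g|\]
is false as stated, because $\phi$ cannot be compactly supported (a compactly supported $\phi$ with all higher moments vanishing would have $\widehat\phi\equiv 1$), so $g\conv\phi_\eps$ on $K$ sees values of $g$ arbitrarily far from $K'$ and the Taylor remainder is evaluated at points $x-t\eps y$ with $y$ unbounded. The correct right-hand side involves a global supremum of $\partial^{M+1+\alpha}g$ (plus the rapidly decaying tail of $\phi$). This is harmless in your application, but only after you enlarge $K'$ to contain a neighbourhood of $\supp(\chi T)$: on $K'$ the $\Gen^\infty$-bound controls all derivatives of $T\conv\phi_{\eps'}$, while off $K'$ they are $O(\eps'^{\,p})$ for every $p$ by the rapid decay of $\partial^\beta\phi_{\eps'}$ away from the support of $\chi T$ --- this needs to be said. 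Two smaller points: the constant net $(f)_\eps$ requires $N=1$ rather than $N=0$ (with $\eps_0$ depending on $\alpha$); and your closing diagnosis of the nonstandard failure is not quite what the paper does --- there the set $S$ is essentially independent of $\alpha$, and what breaks is that $S$ has enormous gaps, so that one can neither choose $\eps\approx\delta/|\xi|$ inside $S$ for all large $\xi$ nor (in your formulation) extract a sequence $\eps_n\in S$ tending to $0$ with controlled ratios $\eps_{n+1}/\eps_n$, without which the term $\eps_n^{M+1}\eps_{n+1}^{-N}$ in your Cauchy estimate is uncontrollable.
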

\begin{proof}
1. Let $T\in\mathcal E'(\Omega)$ with $\iota(T)=[T\conv \phi_\eps]\in\Gen^{\infty}(\Omega)$. Then the Fourier transform $\widehat T\in\Cnt[\infty](\Omega)\cap \mathcal S'(\R^d)$. Because $\iota(T)\in\Gen^{\infty}_c(\Omega)$, we have
\[(\exists N\in\N) (\forall \alpha\in\N^d) (\exists \eps_0>0) (\forall \eps\le\eps_0) (\sup_{x\in\R^d}\langle x\rangle^{2d}|\partial^\alpha (T\conv\phi_\eps)(x)|\le \eps^{-N}).\]
where $\langle x\rangle^2= 1+|x|^2$. For the Fourier transform $\widehat{\iota(T)} = [\widehat{T}\cdot\widehat\phi(\eps\,\cdot)]$, we have
\[|\xi^\alpha \widehat T(\xi) \widehat\phi(\eps\xi)| = |\mathcal F(\partial^\alpha (T\conv\phi_\eps))(\xi)|\le \int_{\R^d} |\partial^\alpha(T\conv\phi_\eps)(x)|\,dx.\]
Hence
\begin{equation}\label{ss-sup}
(\exists N\in\N) (\forall \alpha\in\N^d) (\exists \eps_0>0) (\forall \eps\le \eps_0) (\forall\xi\in\R^d) (|\xi^\alpha \widehat T(\xi)\widehat\phi(\eps \xi)|\le \eps^{-N}).
\end{equation}
From this, we show that $\widehat T\in\mathcal S(\R^d)$. Since $\widehat\phi(0)=\int_{\R^d}\phi=1$, there exists $\delta>0$ such that $|\widehat\phi(\eta)|\ge 1/2$ for $|\eta|\le \delta$. For $\xi\in\R^d$ with $|\xi|\ge \delta/\eps_0$, choosing $\eps=\delta/|\xi|$ in \eqref{ss-sup} (hence $\eps\le\eps_0$) yields
\begin{equation}\label{T-hat-in-S}
(\exists N\in\N) (\forall\alpha\in\N^d) (\forall \xi\in\R^d, |\xi|\ge \delta/\eps_0) (|\xi^\alpha\widehat T(\xi)|\le 2|\xi|^{N}/\delta^N).
\end{equation}
We proceed similarly for all derivatives. Hence $\widehat T\in\mathcal S(\R^d)$, and $T\in\mathcal S(\R^d)\subseteq \Cnt[\infty](\R^d)$.
\smallskip

2. If $T\in\mathcal D'(\Omega)$, then we can use a cut-off to reduce to the case $T\in\mathcal E'(\Omega)$.
\end{proof}

\section{Counterexample in ${}^\rho\mathcal E(\Omega)$}
We give a counterexample in the nonstandard algebra ${}^\rho\mathcal E(\Omega)$ for the case $\Omega=\R$. Essentially, only the step $\eqref{ss-sup}\implies\eqref{T-hat-in-S}$ in the proof of the previous section fails in this setting, which we will exploit. We use an embedding $\iota$ which is given, for $T\in\mathcal E'(\R)$, by $\iota(T)=[T\conv\phi_\eps]$, where the mollifier $\phi\in\ster{\Cnt[\infty]}(\R)$ satisfies $|\widehat\phi(\xi)|\le C_p \langle\xi\rangle^{-p}$ for each $p\in\N$ (for some $C_p\in\R$; in particular, this includes the case where $\phi\in\mathcal S(\R)$; the proof can easily be extended if the inequality only holds for $C_p=|\ln\rho|=[|\ln\eps|]\in\ster\R$, thus including also the case where $\phi$ is given by a mollifier as in \cite[Lemma 3.1]{OT98}). As usual, $\phi_\eps(x):= \eps^{-1}\phi(x/\eps)$.

\begin{ex}
Since $\mathcal U$ is an ultrafilter, $S\in \mathcal U$ or $\co S\in\mathcal U$ for any $S\subseteq(0,1]$. Let $\eps_n:= 1/2^{(n^n)}$. This sequence has the property that $(\forall p\in\N)$ $(\exists N)$ $(\forall n\ge N)$ $(\eps_{n+1}\le \eps_n^p)$. 
Consider $S:=\bigcup_{n\in\N} (\eps_{6n+3}, \eps_{6n}]$. We consider first the case that $S\in\mathcal U$. Then let
\begin{equation}\label{eq-counter}
\widehat T(\xi) := \sum_{m=1}^\infty \psi(\xi-\eps_{6m-1}^{-1}),
\end{equation}
where $\psi\in\mathcal S(\R)$ with $\widehat\psi\in\test(\R)$.

We claim that $T\in{\mathcal E}'(\R)\setminus\Cnt[\infty](\R)$, but $\iota(T)=[T\conv\phi_\eps]\in{}^\rho\mathcal E^\infty(\R)$.
\begin{proof}
With the usual estimates (e.g.\ Peetre's inequality), one sees that the sum \eqref{eq-counter} converges uniformly on compact subsets and that $\widehat T$ is a bounded $\Cnt[\infty]$-function. The sum thus converges in $\mathcal S'(\R)$, and $T\in\mathcal S'(\R)$ is well-defined. As
\[\langle T,\varphi\rangle = \sum_{m=1}^\infty \int_\R{\mathcal F}^{-1}(\psi)(x)e^{i\eps_{6m-1}^{-1}x}\varphi(x)\,dx,\quad\forall\varphi\in\mathcal S(\R),\]
$\supp T\subseteq \supp({\mathcal F}^{-1}(\psi))$, hence $T\in\mathcal E'(\R)$. Seeking a contradiction, suppose that $T\in\Cnt[\infty](\R)$. Then $T\in \mathcal S(\R)$, hence $\widehat T\in\mathcal S(\R)$, too, but due to its definition, $\widehat T(\xi)\not\to 0$ as $\xi\to+\infty$. It remains to be shown that $\iota(T)\in{}^\rho\mathcal E^\infty(\R)$. We first show that $\widehat{\iota(T)}$ satisfies the analogue of equation \eqref{ss-sup}. Let $p\in\N$ arbitrary ($p>0$) and $\eps\in (\eps_{6n+3}, \eps_{6n}]$. We proceed to show that for $n$ sufficiently large,
\begin{equation}\label{eq-counter2}
\langle \xi\rangle^p |\widehat{T\conv\phi_\eps}(\xi)| = \langle \xi\rangle^p|\widehat T(\xi)\widehat\phi(\eps\xi)| \le \sum_{m=1}^{\infty} \langle \xi\rangle^p|\psi(\xi-\eps_{6m-1}^{-1})\widehat\phi(\eps\xi)|\le \eps^{-1}.
\end{equation}
As $\psi\in\mathcal S(\R)$, we find $C_p\in\R$ such that (by Peetre's inequality)
\[\sum_{m=1}^{n} \langle \xi\rangle^p|\psi(\xi-\eps_{6m-1}^{-1})|
\le C_p\sum_{m=1}^{n}\langle \xi\rangle^p\langle\xi-\eps_{6m-1}^{-1}\rangle^{-p}
\le C'_p \sum_{m=1}^{n}\eps_{6m-1}^{-p}\le C''_p\eps_{6n-1}^{-p},\]
which is at most $\eps_{6n}^{-1}\le \eps^{-1}/2$, as soon as $n$ is sufficiently large.\\ Further, if $|\xi|\le \eps_{6n+4}^{-1}$ and $m>n$, then 
\[|\psi(\xi-\eps_{6m-1}^{-1})|\le C\langle\xi-\eps_{6m-1}^{-1}\rangle^{-1}\le C\langle\eps_{6m-1}^{-1}-\eps_{6n+4}^{-1}\rangle^{-1}\le 2C\eps_{6m-1}\]
and thus
\begin{align*}
\sum_{m=n+1}^\infty \langle \xi\rangle^p|\psi(\xi-\eps_{6m-1}^{-1})\widehat\phi(\eps\xi)|
&\le C_p\eps_{6n+4}^{-p}\sum_{m=n+1}^\infty \eps_{6m-1}\le 2C_p\eps_{6n+4}^{-p}\eps_{6n+5}\le 1
\end{align*}
as soon as $n$ is sufficiently large. On the other hand, if $|\xi|\ge\eps_{6n+4}^{-1}$, then
\[|\widehat\phi(\eps\xi)|\le C_p \langle\eps\xi\rangle^{-p-1}\le C'_p \eps^{-p-1}\langle\xi\rangle^{-p-1}\le C''_p\eps_{6n+3}^{-p-1}\langle\xi\rangle^{-p} \eps_{6n+4}\le \langle\xi\rangle^{-p}\]
as soon as $n$ is sufficiently large, and thus
\begin{align*}
\sum_{m=n+1}^\infty \langle \xi\rangle^p|\psi(\xi-\eps_{6m-1}^{-1})\widehat\phi(\eps\xi)|
&\le \sum_{m=n+1}^\infty |\psi(\xi-\eps_{6m-1}^{-1})|\le C.
\end{align*}
This proves \eqref{eq-counter2}, which implies, still for $\eps\in(\eps_{6n+3}, \eps_{6n}]$ and $n$ sufficiently large,
\[|D^k (T\conv\phi_\eps)(x)| = |\mathcal F^{-1}(\xi^k \cdot \widehat{T\conv\phi_\eps})(x)| \le \int_\R \langle\xi\rangle^k |\widehat{T\conv\phi_\eps}(\xi)|\,d\xi\le \eps^{-1}\int_\R \frac{d\xi}{\langle\xi\rangle^2},\]
hence $\iota(T)\in {}^\rho\mathcal E^\infty(\R)$.
\end{proof}
In the case when $\co S=\bigcup_{n\in\N}(\eps_{6n},\eps_{6n-3}]\in\mathcal U$, we similarly consider $\widehat T(\xi):=\sum_{m=0}^\infty \psi(\xi-\eps_{6m+2}^{-1})$.
\end{ex}

\end{document}